\documentclass[12pt]{amsart}
\usepackage{geometry} 
\geometry{a4paper} 
\usepackage{amsmath}
\usepackage{mathtools}
\usepackage{graphicx}
\usepackage{amssymb}
\usepackage{enumerate}
\usepackage{epstopdf}
\newcommand\K{\kappa}
\newtheorem{proposition}{Proposition}
\newtheorem{example}{Example}
\begin{document}

\title{A study of a debt-influenced equilibrium of the Keen model
}


\author{Son Van         \and
        Teng Zhang 
}




\maketitle

\begin{abstract}
The Keen model is a mathematical model that describes the dynamic evolution of wages, employment, and debt based on the known Minsky's Financial Instability Hypothesis. It consists of three first order nonlinear  ordinary differential equations. There exists an equilibrium state that corresponds to a collapse of both wage and employment but certain debts remain. In this paper, we construct an example in which this equilibrium appears to be stable. More interestingly, the debt becomes negative, which indicates some source of income without laboring.

Keywords: Keen model; stability; equilibrium; debt
\end{abstract}

\section{Introduction}
\label{intro}
\subsection{Goodwin model}
	Harrod--Domar model and Phillips curve are basic models in Macroeconomics textbooks. While the Harrod--Domar was a popular way that economists thought about economic growth, the Phillips curve is a way to look at the dynamics of wages. Goodwin model \cite{Goodwin} is a classical model that combines ideas from Harrod--Domar growth model with Phillips curve to describe the dynamics between employment rate and wage share. In the paper of Grasselli and Lima \cite{Grasselli}, extensions of the Goodwin model are studied. In this paper, we adopt notations and formulations from \cite{Grasselli}. More precisely,
	
	$\nu$ is capito-output ratio,
	
	$\alpha$ is productivity rate,
	
	$\beta$ is labor force growth rate,
	
	$\delta$ is depreciation rate,
	
	$w(t)$ is wage,
	
	$K(t)$ is capital,
	
	$L(t)$ is number of employed workers,
	
	$a(t)=a_0e^{\alpha t}$ is labor productivity,
	
	$N(t)=N_0e^{\beta t}$ is total labor force,
	
	$\lambda(t)=\frac{L(t)}{N(t)}$ is employment rate,
	
	$\omega(t)=\frac{w(t)}{a(t)}$ is wage share.
	
	According to \cite{Grasselli}, we assume that the total output (GDP) per year is given by the Leonteif production function
	\begin{equation*}
		Y(t)=\min\bigg\{\frac{K(t)}{\nu},a(t)L(t)\bigg\}
	\end{equation*}
	and the capital is fully utilized. We then have
	\begin{equation*}
		Y(t)=\frac{K(t)}{\nu}=a(t)L(t).
	\end{equation*}
	We also assume that
	\begin{equation*}
		\dot{w}=\Phi(\lambda) w
	\end{equation*}
	and 
	\begin{equation*}
		\dot{K}=(Y-wL)-\delta K = (1-\omega)Y-\delta K,
	\end{equation*}
	where $\Phi(\lambda)$ is the Phillips curve with $d{\Phi}/d\lambda>0$.
	
	Under the linearized Phillips curve assumption $$\Phi(t)=-\phi_0+\phi_1 \lambda,$$ where $\phi_0$ and  $\phi_1$ are positive constants, the following system of first order nonlinear differential equations is then derived in \cite{Grasselli}
	\begin{eqnarray*}
		\dot{\omega}&=&\omega(-\phi_0+\phi_1\lambda-\alpha)\\
		\dot{\lambda}&=&\lambda (\frac{1-\omega}{\nu}-\alpha-\beta-\delta).
	\end{eqnarray*}
This is the well-known Lotka-Volterra equation which has been studied extensively in economic literature in particular and in mathematical literature in general. There are numerous extensions of the model since it was first introduced by Goodwin in  \cite{Goodwin} in chapter ``A Growth Cycle". See, for example, in \cite{Desai1,Desai2,Grasselli}.
	
\subsection{Keen model}
	Keen in [5] proposed a way to extend the Goodwin model by introducing new investments through the banking sector. Here again, we follow the notations from \cite{Grasselli}. Let
	$r$ be the real interest rate,
	$D$ be the debt, and 
	$d=D/Y$ be the debt ratio.
	
	The investments are captured in the nonlinear increasing function $\kappa(\pi)$, where $\pi=1-\omega-rd$ is the net profit. Here, the assumptions on $\kappa$ are as in \cite{Grasselli}:
	\begin{enumerate}[(i)]
		\item $\kappa \text{ is continuously differentiable},$
		\item $\frac{d\kappa}{d\pi} >0,$
		\item $\lim\limits_{\pi\to-\infty} \kappa(\pi)=\kappa_0 <\nu(\alpha+\beta+\delta)<\lim\limits_{\pi\to\infty} \kappa(\pi),$
		\item $\lim\limits_{\pi\to -\infty}\pi^2\kappa'(\pi)=0.$
	\end{enumerate}
	
We also require
	\begin{eqnarray}
		\Phi'(\lambda)&>&0, \\
		\Phi(0)&<&\alpha \label{phi_less}.
	\end{eqnarray}
	Integrating these assumptions into the Goodwin model leads to the following system of first order nonlinear differential equation:
	\begin{eqnarray}
		\dot{\omega}&=&\omega(\Phi(\lambda)-\alpha),			\label{line1}	\\
		\dot{\lambda}&=& \lambda(\frac{\kappa(\pi)}{\nu}-\alpha-\beta-\delta),	\label{line2}\\
		\dot{d}&=&d(r-\frac{\kappa(\pi)}{\nu}+\delta)+\kappa(\pi)-(1-\omega),	\label{line3}
	\end{eqnarray}
	where $\pi=1-\omega-rd$.

	Our analysis is taken in the framework of the Keen model.

\section{Analysis}
\subsection{Equilibria}

As it is shown in \cite{Grasselli}, there are four equilibria for the system. The first one is
\begin{equation}
	(0,0,\overline{d_0})	\label{sol1},
\end{equation}
where $\overline{d_0}$ is a solution of the equation
\begin{equation}
	d(r-\frac{\kappa(1-rd)}{\nu}+\delta)+\kappa(1-rd)-1=0 \label{eq1}.
\end{equation}
The second equilibrium is
\begin{equation}
	(\overline{\omega_1},\overline{\lambda_1},\overline{d_1}) \label{sol2},
\end{equation}
	where
	\begin{eqnarray*}
	\overline{\omega_1}&=&1-\kappa^{-1}(\nu(\alpha+\beta+\delta))-r\frac{\nu(\alpha+\beta+\delta)-\kappa^{-1}(\nu(\alpha+\beta+\delta))}{\alpha+\beta},\\
	\overline{\lambda_1}&=&\Phi^{-1}(\alpha),\\
	\overline{d_1}&=&\frac{\nu(\alpha+\beta+\delta)-\kappa^{-1}(\nu(\alpha+\beta+\delta))}{\alpha+\beta}.
	\end{eqnarray*}	
The third equilibrium is
\begin{equation}
	(0, \lambda, \overline{d_1}),	\label{sol3}
\end{equation}
where $\lambda \in (0,1)$, $1-r\overline{d_1}=\kappa^{-1}(\nu(\alpha+\beta+\delta))$, and $\overline{d_1}$ is the same as in (\ref{sol2}).

Finally, in case of explosive debt, we have the forth equilibrium
\begin{equation}
	(0,0,+\infty).	\label{sol4}
\end{equation}

Equilibrium (\ref{sol3}) is structurely unstable. The stability of (\ref{sol2}) and (\ref{sol4}) depend on the parameters and are studied in detail in \cite{Grasselli}.

In this paper, we focus on the equilibrium (\ref{sol1}). 

\subsection{Negative debts}
To study the stability of equilibrium (\ref{eq1}), we linearize the system (\ref{line1})-(\ref{line3}) about $(0,0,\overline{d_0})$. The corresponding Jacobian matrix is
\begin{equation}\label{jacobian}
	J(0,0,\overline{d_0})=\begin{bmatrix}
		\Phi(0)-\alpha	&	0	&	0\\
		0			&	\frac{\kappa(\overline{\pi_0})-\nu(\alpha+\beta+\delta)}{\nu}	&0\\
		\frac{(\overline{d_0}-\nu)\kappa'(\overline{\pi_0})+\nu}{\nu} &	0	&\frac{\nu(r+\delta)-\kappa(\overline\pi_0)+r(\overline d_0-\nu)\kappa'(\overline\pi_0)}{\nu}
	\end{bmatrix}.
\end{equation}
This matrix has three eigenvalues
	\begin{eqnarray}
		\Phi(0)-\alpha,\\
		\frac{\kappa(\overline{\pi_0})-\nu(\alpha+\beta+\delta)}{\nu}	\label{ei_1},\\
		\frac{\nu(r+\delta)-\kappa(\overline\pi_0)+r(\overline d_0-\nu)\kappa'(\overline\pi_0)}{\nu} \label{ei_2}.
	\end{eqnarray}

Note that, by assumption (\ref{phi_less}), the first eigenvalue is strictly negative.
\begin{proposition}
	Suppose $\K(x)$ has the form $c+\K_1e^{\K_2 x}$. There is a regime of parameters for the system (\ref{line1})-(\ref{line3}) such that the equilibrium $(0, 0, \overline{d_0})$ is stable and $\overline{d_0} <0$.
\end{proposition}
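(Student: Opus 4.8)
The plan is to exploit the lower-triangular structure of the Jacobian \eqref{jacobian}: its eigenvalues are exactly the diagonal entries, and since $\Phi(0)-\alpha<0$ by \eqref{phi_less}, stability of $(0,0,\overline{d_0})$ reduces to making the two remaining eigenvalues \eqref{ei_1} and \eqref{ei_2} strictly negative. First I would record that $\kappa(x)=c+\kappa_1 e^{\kappa_2 x}$ satisfies the standing hypotheses (i)--(iv) whenever $\kappa_1,\kappa_2>0$ and $c<\nu(\alpha+\beta+\delta)$; in particular the identity $\kappa'(x)=\kappa_2\bigl(\kappa(x)-c\bigr)$ holds, and this relation will be the engine of the argument.

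The central difficulty is the implicit coupling between the parameters, the root $\overline{d_0}$ of \eqref{eq1}, and the value $\overline{\pi_0}=1-r\overline{d_0}$ at which $\kappa$ and $\kappa'$ are evaluated. To decouple this, I would reverse the logic: rather than fixing the parameters and solving \eqref{eq1} for $\overline{d_0}$, I would prescribe a target $\overline{\pi_0}=\pi^\ast>1$, so that $\overline{d_0}=(1-\pi^\ast)/r<0$ and the desired sign of $\overline{d_0}$ is automatic. Substituting $\overline{d_0}=(1-\pi^\ast)/r$ into \eqref{eq1} and solving for $k:=\kappa(\overline{\pi_0})$ gives the closed form
\begin{equation*}
	k=\frac{\nu\bigl[\pi^\ast(r+\delta)-\delta\bigr]}{r\nu+\pi^\ast-1},
\end{equation*}
which, crucially, depends only on $\nu,r,\delta,\pi^\ast$ and not on the pair $(\kappa_1,\kappa_2)$ separately. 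Setting $\kappa_1=(k-c)e^{-\kappa_2\pi^\ast}$ then realizes $\kappa(\pi^\ast)=k$ for any $\kappa_2>0$ while keeping the equilibrium pinned at $\overline{d_0}=(1-\pi^\ast)/r$.

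Next I would rewrite the eigenvalue \eqref{ei_2} using the equilibrium identity $\overline{d_0}=\nu(1-k)/E$ with $E:=\nu(r+\delta)-k$. A short computation in which the explicit factors of $k$ cancel (because $1-k-E=1-\nu(r+\delta)$) shows that $\nu$ times the eigenvalue \eqref{ei_2} equals
\begin{equation*}
	E+\frac{r\nu\bigl(1-\nu(r+\delta)\bigr)}{E}\,\kappa'(\overline{\pi_0}),
	\qquad \kappa'(\overline{\pi_0})=\kappa_2(k-c).
\end{equation*}
The strategy is now transparent. I would choose $\nu,r,\delta$ with $\nu(r+\delta)>1$, and then $\alpha,\beta$ so that $k<\min\{\nu(\alpha+\beta+\delta),\,\nu(r+\delta)\}$; the second inequality gives $E>0$ (and forces $k>1$ by consistency of $\overline{d_0}=\nu(1-k)/E<0$), while the first makes \eqref{ei_1} negative. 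Since $\nu(r+\delta)>1$, the coefficient of $\kappa'(\overline{\pi_0})$ above is strictly negative, so taking $\kappa_2$ large drives the displayed quantity to $-\infty$ and secures the negativity of \eqref{ei_2} without perturbing $\overline{d_0}$, \eqref{ei_1}, or \eqref{eq1}.

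Finally, I would exhibit explicit values to confirm the regime is nonempty, e.g.\ $\nu=3$, $r=1$, $\delta=\tfrac12$, $\pi^\ast=2$ (so that $\overline{d_0}=-1$, $k=\tfrac{15}{8}$, $E=\tfrac{21}{8}$), $\alpha=\beta=\tfrac{1}{10}$, $c=0$, $\kappa_2=1$, $\kappa_1=\tfrac{15}{8}e^{-2}$, together with any admissible $\Phi$ satisfying $\Phi(0)<\alpha$; one checks directly that \eqref{eq1} holds and that all three eigenvalues are negative while $\overline{d_0}<0$. The main obstacle is conceptual rather than computational: recognizing that fixing $\pi^\ast$ (equivalently $\overline{d_0}$) severs the circular dependence on $\kappa$, after which the single free parameter $\kappa_2$ controls \eqref{ei_2} alone.
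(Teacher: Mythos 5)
Your proposal is correct, and it follows essentially the same strategy as the paper's own proof: exploit the triangular structure of the Jacobian (\ref{jacobian}), prescribe the equilibrium debt $\overline{d_0}$ first so that the equilibrium equation (\ref{eq1}) determines $\kappa(\overline{\pi_0})$ in closed form, use the remaining freedom in $\kappa_2$ to make the eigenvalue (\ref{ei_2}) negative, and solve for $\kappa_1$ last. The only substantive difference is the regime exhibited: the paper assumes $\nu(\alpha+\beta+\delta)<1$ and $r<\alpha+\beta$, so that requiring (\ref{ei_1}) to be negative automatically forces $\overline{d_0}<0$, whereas you impose $\overline{d_0}<0$ directly (via $\pi^\ast>1$) and then choose $\alpha+\beta$ large enough that (\ref{ei_1}) is negative --- both choices produce valid parameter regimes, which is all the proposition requires.
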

\begin{proof}
To prove this statement, we find sufficient conditions for the eigenvalues (\ref{ei_1}) and (\ref{ei_2}) to be negative. The following conditions must hold
	\begin{subequations}
		\begin{align}
			\K (\overline{\pi_0})-\nu(\alpha+\beta+\delta)	&<	0	\label{cond_1a},\\
			\nu(r+\delta)-\K(\overline\pi_0)+r(\overline d_0-\nu)\K'(\overline\pi_0)	&<	0	\label{cond_1b},\\
			d[r-{\K (1-rd)\over \nu}+\delta]+\K (1-rd)-1	&=	0.	\label{cond_1c}
		\end{align}
	\end{subequations}
	We furthermore assume that the following holds
	\begin{subequations}
		\begin{align}
			\nu(\alpha+\beta+\delta)	<	1	\label{as_1},	\\
			r	<	\alpha+\beta			\label{as_2}.
		\end{align}
	\end{subequations}
	Set $ \K(x)= c+\K_1e^{\K_2x} $. The goal is to find the conditions for $c$ and $ \overline{d_0} $ and construct a function $ \K(x) $ that satisfies (\ref{cond_1a})-(\ref{cond_1c}).
	The equation (\ref{cond_1c}) implies that
	\begin{equation}
		\K(1-rd)={1-d(r+\delta) \over 1- d/\nu}.		\label{eq_2}
	\end{equation}
	We substitute (\ref{eq_2}) into (\ref{cond_1a}) to find that
	\begin{equation}
		\overline d_0	<	{\nu(\alpha+\beta+\delta)-1 \over \alpha+\beta-r}	<0 \label{d_bar}.
	\end{equation}
	We can now pick any \( \overline d_0 \in (-\infty; {\nu(\alpha+\beta+\delta)-1 \over \alpha+\beta-r}) \). Once we have picked \( \overline d_0 \), we can determine \( \K(\overline \pi_0) \).

	The inequality (\ref{cond_1b}) is equivalent to
	\begin{equation}
		r(\overline d_0-v)\K_2k_0(\overline\pi_0)<-\nu(r+\delta)+c+k_0(\overline\delta_0),	\label{eq_1}
	\end{equation}
	where \( k_0(x)=\K_1e^{\K_2x} \). From (\ref{eq_2}), 
	\begin{equation}
		k_0(1-rd)={1-d(r+\delta) \over 1- d/\nu}-c={1-c-d(r+\delta-c/\nu) \over 1-d/\nu}.	\label{eq_3}
	\end{equation}
	Next, we choose $c$ such that \( c < \nu(r+\delta) \). The assumption on \( c \) leads to \( k_0(\overline\pi_0) >0 \). Since \( \overline d_0 <0 \), (\ref{eq_1}) becomes
	\begin{equation}
		\K_2>{c-\nu(r+\delta)+k_0(\overline\pi_0) \over k_0(\overline\pi_0)r(\overline d_0 -\nu)}.\label{bound}
	\end{equation}
	We also require that $\kappa_2>0$. After picking \( \K_2 \in (\max\{0,{c-\nu(r+\delta)+k_0(\overline\pi_0) \over k_0(\overline\pi_0)r(\overline d_0 -\nu)}\},+\infty)\), one can compute \( \K_1 \) to be
	\begin{equation}
		\K_1={k_0(\overline \pi_0) \over e^{\K_2\overline\pi_0}}.
	\end{equation}
The function $\kappa(x)=c+\kappa_1e^{\kappa_2x}$ with $c$, $\kappa_1$ and $\kappa_2$ chosen as described above satisfies (\ref{cond_1a})-(\ref{cond_1c}). We have successfully constructed the function \( \K(x) \) such that all of the eigenvalues of the Jacobian (\ref{jacobian}) are negative and therefore the equilibrium $(0,0,\overline{d_0})$ is stable.

Furthermore, by (\ref{d_bar}), $\overline{d_0}<0$ as desired.
\end{proof}

Here we construct an example of the function $\kappa(x)$ using the method we described.
\begin{example}
	Choose  $r=0.03$, $\alpha=0.05$, $\beta= 0.03$, $\delta=0.1$, $\nu=3$ and
	$$\kappa(x)= 0.34 + 0.0836e^{0.6829x}.$$
	Take the Philips curve to be as in \cite{Grasselli}:
	\begin{equation}
		\Phi(\lambda)=\frac{\phi_1}{(1-\lambda)^2}-\phi_0,
	\end{equation}
	where
	\begin{equation}
		\phi_0=\frac{0.04}{1-0.04^2}, \phi_1=\frac{0.04^3}{1-0.04^2}.	
	\end{equation}
	The equation (\ref{cond_1c}) has two solutions
	\begin{equation}
		\overline{d_0}\approx\begin{cases}
			-9.2100,\\
			86.5545.
		\end{cases}
	\end{equation}
	We are intersted in the negative real root of (\ref{cond_1c}). The eigenvalues at $(0,0,-9.2100)$ are
	\begin{equation}
		(-0.0900, -0.00012285, -0.1664).
	\end{equation}
So, $(0,0,-9.2100)$ is a stable equilbirum.
\end{example}

In general, one cannot predict the behavior of the system in the neighborhood  of the equilibrium $(0,0,\overline{d_0})$ unless additional information is avalable. In the literature, it is assumeed that in most cases the two eigenvalues (\ref{ei_1}) and (\ref{ei_2})   have opposite signs. However, it seems likely that there exist  regions in the parameter space  where the eigenvalues are of the same sign, possibly both negative. In any case, it is clear that more investigation of this matter is needed. A possible starting point is to describe the regimes where the two eigenvalues (\ref{ei_1}) and (\ref{ei_2}) are both equal to zero.

\begin{proposition}
	Given $\kappa(x)=c+\kappa_1e^{\kappa_2x}$, a necessary condition for (\ref{ei_1}) and (\ref{ei_2}) to be both 0 is that
	$$c\ge 2\sqrt{\frac{(r+\delta)(1-2\nu(r+\delta))}{rk_2}}+1-\nu(r+\delta),$$
	or
	$$c\le -2\sqrt{\frac{(r+\delta)(1-2\nu(r+\delta))}{rk_2}}+1-\nu(r+\delta).$$
\end{proposition}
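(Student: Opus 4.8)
The plan is to convert ``both eigenvalues vanish'' into a small system of scalar equations, use the exponential form of $\kappa$ to remove $\kappa'$, and then reduce everything to a single quadratic whose real solvability is exactly the asserted two-sided inequality. Setting the eigenvalues (\ref{ei_1}) and (\ref{ei_2}) equal to zero turns (\ref{cond_1a}) and (\ref{cond_1b}) into the equalities
\begin{align*}
\kappa(\overline{\pi_0}) &= \nu(\alpha+\beta+\delta),\\
\nu(r+\delta)-\kappa(\overline{\pi_0})+r(\overline{d_0}-\nu)\kappa'(\overline{\pi_0}) &= 0,
\end{align*}
which must be consistent with the equilibrium equation (\ref{cond_1c}). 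Since $\kappa(x)=c+\kappa_1e^{\kappa_2x}$, the first thing I would record is $\kappa'(\overline{\pi_0})=\kappa_2\bigl(\kappa(\overline{\pi_0})-c\bigr)$, so that $\kappa'$ disappears in favour of $\kappa(\overline{\pi_0})$ and the single parameter $\kappa_2$.

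The key structural observation is that the first equality only serves to fix $\alpha+\beta$ and places no restriction on the triple $(c,\kappa_2,\overline{d_0})$; the genuinely binding relations are the second equality together with (\ref{cond_1c}). I would therefore substitute the equilibrium expression (\ref{eq_2}), namely $\kappa(\overline{\pi_0})=\frac{1-\overline{d_0}(r+\delta)}{1-\overline{d_0}/\nu}$, together with $\kappa'(\overline{\pi_0})=\kappa_2(\kappa(\overline{\pi_0})-c)$, into the second equality. Clearing the denominator $1-\overline{d_0}/\nu$ yields a quadratic relation in $\overline{d_0}$ (equivalently in $\kappa(\overline{\pi_0})$) whose coefficients are explicit polynomials in $c,r,\delta,\nu,\kappa_2$. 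The cleanest way to organise the final step is to read the resulting relation as the assertion that two auxiliary real quantities have sum $c-1+\nu(r+\delta)$ and product $\frac{(r+\delta)(1-2\nu(r+\delta))}{r\kappa_2}$; with that bookkeeping the target inequality is just the requirement that these two quantities are real.

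Because $\overline{d_0}$ is an actual (real) equilibrium, the quadratic must admit a real root, so its discriminant is nonnegative; equivalently, the two auxiliary quantities above must be real, which forces $(\text{sum})^2\ge 4(\text{product})$, i.e.
\[
\bigl(c-1+\nu(r+\delta)\bigr)^2 \ \ge\ \frac{4(r+\delta)\bigl(1-2\nu(r+\delta)\bigr)}{r\kappa_2}.
\]
Solving this quadratic inequality in $c$ produces exactly the two stated branches, centred at $1-\nu(r+\delta)$ with half-width $2\sqrt{(r+\delta)(1-2\nu(r+\delta))/(r\kappa_2)}$. I expect the main obstacle to be purely computational and to be precisely where the stated form is won or lost: one must carry out the elimination carefully enough that the discriminant collapses to this centred square, confirm that the radicand is genuinely $(r+\delta)(1-2\nu(r+\delta))/(r\kappa_2)$ (in particular the appearance of $1-2\nu(r+\delta)$ rather than $1-\nu(r+\delta)$), and check the sign of the leading coefficient so that the conclusion is two-sided (``outside an interval'') rather than a single bound. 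Throughout I would keep the positivity hypotheses $\kappa_2>0$ and $1-\nu(r+\delta)>0$ in view to state the branch structure correctly; and since only the discriminant is used, the resulting inequality is necessary but is not claimed to be sufficient.
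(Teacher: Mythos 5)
There is a genuine gap, and it sits exactly at the step you deferred as ``purely computational'': your elimination does not produce a discriminant of the claimed form, and no bookkeeping can make it do so. Write $A=\nu(r+\delta)$ and $\kappa=\kappa(\overline{\pi_0})$. Your plan is to discard the first eigenvalue equation (\ref{cond_2a}), use the equilibrium relation (\ref{eq_2}) to eliminate $\kappa$ or $\overline{d_0}$, and impose reality of $\overline{d_0}$ on what remains of (\ref{cond_2b}). Carrying this out: (\ref{cond_2c}) gives $\overline{d_0}=\nu(1-\kappa)/(A-\kappa)$, hence $\overline{d_0}-\nu=\nu(1-A)/(A-\kappa)$, and (\ref{cond_2b}) with $\kappa'(\overline{\pi_0})=\kappa_2(\kappa-c)$ becomes, after clearing the denominator,
\begin{equation*}
(A-\kappa)^2+r\kappa_2\nu(1-A)(\kappa-c)=0,
\end{equation*}
which is your quadratic (the version in $\overline{d_0}$ is equivalent). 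Its discriminant is
\begin{equation*}
r\kappa_2\nu(1-A)\bigl[r\kappa_2\nu(1-A)+4(c-A)\bigr],
\end{equation*}
which is \emph{linear} in $c$: in the $\kappa$-form $c$ enters only the constant coefficient, and in the $\overline{d_0}$-form the $c^2$ terms cancel identically through $(A+1-2c)^2-4(A-c)(1-c)=(A-1)^2$. The necessary condition you actually get is the one-sided bound $c\ge A-\tfrac{1}{4}r\kappa_2\nu(1-A)$ (when $A<1$), not a two-sided condition centred at $1-A$; the pair you propose as ``sum'' $c-1+\nu(r+\delta)$ and ``product'' $(r+\delta)(1-2\nu(r+\delta))/(r\kappa_2)$ is simply not the sum and product attached to this quadratic. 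So the proposal, executed correctly, proves a different (also valid, but different) statement and cannot reach the one asserted.

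The divergence is caused precisely by your ``key structural observation.'' The paper does \emph{not} discard (\ref{cond_2a}): it substitutes $\kappa(\overline{\pi_0})=\nu(\alpha+\beta+\delta)=:B$ into (\ref{cond_2c}) to express $\overline{d_0}$ through $B$ (the paper uses $\overline{d_0}=(1-B)/(r+\delta)$), and then reads the remaining relation (\ref{del}) as a quadratic in $B$, demanding that $B$ --- not $\overline{d_0}$ --- be real. In (\ref{del}) the parameter $c$ appears in both the linear and the constant coefficients, so the discriminant with respect to $B$ is genuinely quadratic in $c$, and that is the only source of the two-branch conclusion in the proposition. Which unknown you require to be real is not bookkeeping; it changes the condition. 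Be aware also that the two routes do not reconcile by doing your algebra more carefully: if one performs the paper's substitution exactly, keeping the $B/\nu$ term (so that $\overline{d_0}=\nu(1-B)/(A-B)$ rather than $(1-B)/(r+\delta)$), one lands back on the quadratic displayed above and again on a condition linear in $c$. Reproducing the stated inequality therefore requires following the paper's derivation as written, intermediate expression for $\overline{d_0}$ included, not merely its strategy --- a point your proposal's structural shortcut makes impossible to recover.
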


\begin{proof}
	The proof of this condition is a straight forward algebraic calculation.
	We require that
	\begin{subequations}
		\begin{align}
			\K (\overline{\pi_0})-\nu(\alpha+\beta+\delta)	&=	0,	\label{cond_2a}\\
			\nu(r+\delta)-\K(\overline\pi_0)+r(\overline d_0-\nu)\K'(\overline\pi_0)	&=	0	\label{cond_2b},\\
			d[r-{\K (1-rd)\over \nu}+\delta]+\K (1-rd)-1	&=	0	\label{cond_2c}.
		\end{align}
	\end{subequations}
	We substitute (\ref{cond_2a}) into (\ref{cond_2c}) and obtain
	\begin{equation}
		d=\frac{1-\nu(\alpha+\beta+\delta)}{r+\delta}.
	\end{equation}
	The equation (\ref{cond_2b}) then becomes
	\begin{equation}
		\nu(r+\delta)-\nu(\alpha+\beta+\delta)+r(\frac{1-\nu(\alpha+\beta+\delta)}{r+\delta}-\nu)(\nu(\alpha+\beta+\delta)-c)\kappa_2=0.
	\end{equation}
	For clarity, let $A=\nu(r+\delta)$ and $B=\nu(\alpha+\beta+\delta)$. We then have
	\begin{equation}
			A-B+r\left(\frac{1-A-B}{r+\delta}\right)(B-c)\kappa_2=0.
	\end{equation}
	This is equivalent to
	\begin{equation}
		-\frac{rB^2\kappa_2}{r+\delta}+B(-1+\frac{r(1-A)\kappa_2}{r+\delta}+\frac{rc\kappa_2}{r+\delta})+A-\frac{r(1-A)c\kappa_2}{r+\delta} =0.		\label{del}
	\end{equation}
	We require (\ref{del}) to have real roots. This means that
	\begin{equation}
		\left(-1+\frac{r(1-A)\kappa_2}{r+\delta}+\frac{rc\kappa_2}{r+\delta}\right)^2-4\frac{r^2(1-2A)c\kappa_2^2}{(r+\delta)^2} \ge 0,
	\end{equation}
	which is equivalent to
	\begin{equation}
		\left(\frac{r\kappa_2}{r+\delta}(1-A-c)+1\right)^2\ge \frac{4r\kappa_2}{r+\delta}(1-2A).
	\end{equation}
	If $1-A\le 0$, then this is always true. If $1-A\ge 0$ then we will have
	\begin{equation}
		\left|\frac{r\kappa_2}{r+\delta}(1-A-c)\right|\ge \sqrt{\frac{4r\kappa_2(1-2A)}{r+\delta}}.
	\end{equation}
	This completes the proof.
\end{proof}

\section{Discussions}
The equilibrium $(0,0,\overline{d_0})$ with $\overline{d_0}<0$ represents the economic state when there is no wage, no employment, but the debt is negative. In the liturature, due to its likelihood of being unstable, this equilibrium is believed to be economically meaningless. However, the authors can think of a situation when this is not at all meaningless: supose there is a break down in the economy due to war. There is no production. Yet, in the country, there is still a lot of products that are consumable. In order to have food, person A, who does not have money, has to borrow money from the government, who has the power of printing money and issuing credits. This continues for a long time for everyone. Over time, negative debts accumulate for the government. In this situation, the economy is still somewhat functional, as long as the consumable proucts do not run out.

Furthermore, while the equilibrium studied in this paper may not be of any interest by itself, a good understanding of the equilibirum will lead to a better understading of the dynamics of the model as a whole. A series of vital questions then arise: 1) When would this equilibrium turn from stable to unstable? 2) Is stability the necessary condition when one has negative debt in the economy? 3) Is there a limit cycle that arises around the equilibirum when it changes from stable to unstable? If yes, is the limit cycle stable or unstable?

\section{acknowledgements}
We thank Dr. Anna Ghazaryan for her encouragement and help and invaluable discussions. She posed the problem, gave us directions and helped us with the writing.

Teng Zhang was in part supported by the NSF grant DMS-1311313 to Anna Ghazaryan.

\bibliography{bibo}
\bibliographystyle{plain}
\end{document}